\theoremstyle{plain}
\newtheorem{theorem}[subsection]{Theorem}
\newtheorem{proposition}[subsection]{Proposition}
\newtheorem{lemma}[subsection]{Lemma}
\newtheorem{corollary}[subsection]{Corollary}
\theoremstyle{definition}
\newtheorem{conjecture}[subsection]{Conjecture}
\theoremstyle{remark}
\newtheorem{remark}[subsection]{Remark}
\numberwithin{equation}{subsection}
\author{Tabes Bridges}
\author{Rankeya Datta}
\author{Joseph Eddy}
\author{Michael Newman}
\author{John Yu}
\thanks{Results in this paper were obtained during an REU at
Columbia University led by A.J. de Jong in the summer of 2012.}
\email{Tabes Bridges mtb2123@columbia.edu}
\email{Rankeya Datta rd2443@columbia.edu}
\email{Joseph Eddy jle2129@columbia.edu}
\email{Michael Newman mgnewman@umich.edu}
\email{John Yu jy2433@columbia.edu}
\begin{document}

\title{Free and very free morphisms into a Fermat hypersurface}

\maketitle

\section{Introduction}

\noindent
Any smooth projective Fano variety in characteristic zero is rationally
connected and hence contains a very free rational curve. In positive
characteristic a smooth projective Fano variety is rationally chain
connected. However, it is not known whether such varieties are separably
rationally connected, or equivalently, whether they have a very free
rational curve. This is an open question even for nonsingular
Fano hypersurfaces. See \cite{kollar}, as well as \cite{debarre}.

\medskip\noindent
Following \cite{mingmin}, we consider the degree $5$
Fermat hypersurface
$$
X \quad:\quad X_0^5 + X_1^5 + X_2^5 + X_3^5 + X_4^5 + X_5^5 = 0
$$
in $\mathbb{P}^5$ over an algebraically closed field $k$ of characteristic $2$.
Note that $X$ is a nonsingular projective Fano variety.

\begin{theorem}
\label{theorem-main}
Any free rational curve $\varphi : \mathbb{P}^1 \to X$ has degree $\geq 8$
and there exists a free rational curve of degree $8$. Any very free rational
curve $\varphi : \mathbb{P}^1 \to X$ has degree $\geq 9$ and there exists a
very free rational curve of degree $9$.
\end{theorem}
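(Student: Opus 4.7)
The plan is to translate both freeness and very-freeness into the surjectivity of an explicit linear map on spaces of polynomials, reduce those surjectivity questions via a characteristic-$2$ Frobenius decomposition, and then bound the image using the constraint $\sum f_i^5 = 0$. Write $\varphi = [f_0 : \cdots : f_5]$ with $f_i \in k[s,t]_d$ and $\gcd(f_i) = 1$. Since $\partial F / \partial X_i = 5 X_i^4 = X_i^4$ in characteristic $2$, the Euler sequence and the conormal sequence for $T_X$ present $\varphi^* T_X$ as the quotient, by the section $(f_i)$, of the kernel of the map $\mathcal{O}(d)^{\oplus 6} \to \mathcal{O}(5d)$ given by $(a_i) \mapsto \sum f_i^4 a_i$. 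The long exact cohomology sequences show that $\varphi$ is free iff the ``twisted Jacobian''
\[
J_{-1} \colon (k[s,t]_{d-1})^{\oplus 6} \longrightarrow k[s,t]_{5d-1}, \qquad (a_i) \mapsto \sum_i f_i^4 a_i,
\]
is surjective, and very free iff the analogous map $J_{-2} \colon H^0(\varphi^* T_{\mathbb{P}^5}(-2)) \to H^0(\mathcal{O}(5d-2))$ is surjective (here one must take care because $H^1(\mathbb{P}^1, \mathcal{O}(-2)) \neq 0$ contributes an extra section to the source).

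Next I would exploit the following characteristic-$2$ trick. Every homogeneous polynomial of odd degree on $\mathbb{P}^1_k$ (with $k$ algebraically closed) decomposes uniquely as $sA^2 + tB^2$, and every even-degree polynomial as $A^2 + stB^2$. Because $f_i^4 = (f_i^2)^2$ is a square, substituting such a decomposition of each $a_i$ expands $\sum f_i^4 a_i$ as $s(\sum f_i^2 p_i)^2 + t(\sum f_i^2 q_i)^2$ (or an analogous variant in even degree). Surjectivity of $J_{-1}$ thus reduces to surjectivity of one or two smaller auxiliary maps of the form
\[
\mathcal{M} \colon (k[s,t]_*)^{\oplus 6} \longrightarrow k[s,t]_*, \qquad (p_i) \mapsto \sum_i f_i^2 p_i,
\]
on a target whose dimension is roughly half the original. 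Since squaring is injective on $k$-spans in characteristic $2$ over an algebraically closed field, the image of $\mathcal{M}$ has dimension bounded by $\dim_k \mathrm{span}\{f_i\}$, and a dimension count then shows $\mathcal{M}$ cannot be surjective unless $\dim_k \mathrm{span}\{f_i\}$ reaches a certain threshold.

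For $d = 1, 2, 3$ one has $\dim_k \mathrm{span}\{f_i\} \leq d + 1 \leq 4$ automatically, which is below the threshold and so prevents surjectivity. For $4 \leq d \leq 7$ the span can a priori be large enough, and one must exploit the equation $\sum f_i^5 = 0$ itself to rule out maximal span. I would do this by case analysis: assuming a convenient basis of $k[s,t]_d$ comprises some of the $f_i$, expanding $\sum f_i^5$ in monomials produces coefficients that, by an arithmetic computation in $\mathbb{F}_{16}$, cannot simultaneously vanish. This gives $d \geq 8$ for free, and the same scheme carefully applied to $J_{-2}$, tracking the extra section from $H^1(\mathcal{O}(-2))$, yields $d \geq 9$ for very free.

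For the sharpness of the bounds I would construct explicit examples using the characteristic-$2$ identity $\sum_{i=0}^{4}(s+\zeta^i t)^5 = s^5 + t^5$, where $\zeta \in k$ is a primitive fifth root of unity (which exists since $k$ contains $\mathbb{F}_{16}$). Multiplying each linear factor by a common polynomial of suitable degree, and supplementing by additional components such as $s$ and $t$, one produces a tuple $(f_i)$ of degree $8$ (resp.\ $9$) satisfying $\sum f_i^5 = 0$; freeness (respectively very-freeness) of the resulting curve is then verified by checking surjectivity of $\mathcal{M}$ (resp.\ of $J_{-2}$) for the specific tuple, a finite linear-algebra computation. I expect the most delicate step to be the case analysis for $d = 4, 5, 6, 7$, since ruling out curves of maximal span on $X$ requires genuinely combining the Frobenius reduction with the fine structure of $\sum f_i^5 = 0$ in characteristic $2$.
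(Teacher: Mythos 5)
Your overall strategy---translate (very) freeness into surjectivity of $(a_i)\mapsto\sum f_i^4a_i$ on graded pieces and exploit Frobenius in characteristic $2$---is the same starting point as the paper, but the load-bearing step of your lower bound is incorrect as stated. You claim that after the square (or fourth-power) decomposition the image of $\mathcal{M}\colon(p_i)\mapsto\sum f_i^2p_i$ has dimension bounded by $\dim_k\operatorname{span}\{f_i\}\le 6$. That is only true when the auxiliary polynomials $p_i$ are forced to be constants; in general the image of $\mathcal{M}$ in degree $N$ is a graded piece of the ideal $(f_0^2,\dots,f_5^2)$, whose dimension grows linearly in $N$ (indeed for the paper's degree-$8$ example the analogous map is surjective onto a $40$-dimensional space while $\dim\operatorname{span}\{f_i\}=6$). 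What the Frobenius decomposition actually buys, and what the paper proves, is the module-theoretic statement that the fourth powers of a basis of $\Omega_X(\varphi)=\ker((A_i)\mapsto\sum A_iG_i)$ form a basis of $E_X(\varphi)=\ker((A_i)\mapsto\sum A_iG_i^4)$, hence $f_i=4e_i+5d$ for the splitting types. The resulting congruence $f_i\equiv d\pmod 4$, combined with $\sum f_i=d$ and $f_i\ge 0$, eliminates degrees $1,2,3,6,7$ with no case analysis and shows no degree-$8$ curve is very free; your proposed case analysis for $d=6,7$ via the structure of $\sum f_i^5=0$ is unnecessary, and without the congruence your dimension count does not close. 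Degrees $4$ and $5$ do require the extra input you describe (the span of the $G_i$ must be maximal, and then $\sum G_i^5=0$ yields contradictory linear relations among the coefficient columns under the Frobenius-twisted matrix); your sketch points in the right direction there but is not carried out.

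The existence half also has a gap. The identity $\sum_{i=0}^4(s+\zeta^it)^5=s^5+t^5$ produces seven terms, and absorbing the right-hand side into a sixth coordinate would require $s^5+t^5$ to be a fifth power, which it is not; so the proposed construction does not obviously yield a valid $6$-tuple with $\sum G_i^5=0$, let alone one that is free. Since freeness is a property of the particular curve and not of its degree, the existence claims in degrees $8$ and $9$ genuinely require exhibiting explicit tuples and computing the splitting type of the kernel module (the paper does this by writing down specific $G_i$ and computing the rank of the map $\tilde\varphi$ in degrees $9$ and $10$). As it stands, your proposal establishes neither the sharp lower bounds nor the existence statements.
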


\noindent
This result, although perhaps expected, is interesting for several
reasons. First, it is known that $X$ is unirational,
see \cite[Page 52]{debarre} (the corresponding rational map
$\mathbb{P}^4 \dasharrow X$ is inseparable). Second, in
\cite{beauville}, it is shown that every nonsingular hyperplane section of $X$
is isomorphic to a Fermat hypersurface of dimension 3 and this property
characterizes Fermat hypersurfaces among all hypersurfaces of degree 5
in characteristic $2$. We believe that these facts single out the Fermat as a
likely candidate for a counter example to the conjecture below; instead
our theorem shows that they are evidence for it.

\begin{conjecture}
\label{conjecture-fano-src}
Nonsingular Fano hypersurfaces have very free rational curves.
\end{conjecture}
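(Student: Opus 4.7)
The plan would be to split into cases by the pair $(n,d)$ with $X \subset \mathbb{P}^n_k$ a smooth hypersurface of degree $d \leq n$ over an algebraically closed field $k$ of characteristic $p$, and attack each regime by a combination of an explicit construction on a well-chosen special hypersurface together with a deformation argument to spread the conclusion across the moduli space. In the easy regime, where $d$ is small enough relative to $n$ that lines through a general point of $X$ sweep out the tangent space in a useful way, one attaches free lines to a base rational curve and smooths the resulting comb following Koll\'ar's construction; this works in arbitrary characteristic and produces very free curves with no new ideas.

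The delicate regime is $d$ close to $n$, especially the anticanonical case $d = n$, where lines may not suffice. Here the approach would mirror Theorem \ref{theorem-main}: fix a Fermat-type smooth hypersurface $X_0$ of degree $d$ in $\mathbb{P}^n$, exploit its large automorphism group to parametrize low-degree rational curves by a manageable combinatorial data, and compute the splitting of $\varphi^\ast T_{X_0}$ directly, exhibiting one such $\varphi$ for which every summand has degree $\geq 2$. This would furnish a single very free curve on $X_0$. Openness of very freeness in the universal family over the moduli space $B$ of smooth degree-$d$ hypersurfaces in $\mathbb{P}^n$ then yields very free curves on a Zariski neighborhood of $[X_0]$ in $B$.

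The principal obstacle is twofold. First, there is no known uniform construction for the Fermat step in the hard regime: each triple $(n,d,p)$ requires its own combinatorial analysis analogous to the one carried out here for $(5,5,2)$, the required degree of the test curve grows with the parameters, and the characteristic-specific pathologies (inseparability of the unirational parametrization, vanishing of certain coefficients) vary from case to case, so a single uniform argument seems out of reach. Second, passing from very freeness on a Zariski open subset of $B$ to very freeness on \emph{every} smooth hypersurface of the given type is not automatic, since deformation invariance of separable rational connectedness in positive characteristic is itself an open problem. An unconditional proof of the conjecture would therefore require either a uniform geometric construction covering all Fano hypersurface types and all characteristics, or a new specialization result bridging generic and arbitrary smooth Fano hypersurfaces; the present paper should be read as supplying one data point toward the former program.
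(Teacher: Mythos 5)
This statement is a \emph{conjecture}, not a theorem: the paper offers no proof of it, and Theorem \ref{theorem-main} is presented explicitly as evidence for it --- a single data point, the degree $5$ Fermat hypersurface in $\mathbb{P}^5$ over a field of characteristic $2$. Your proposal is likewise not a proof, and to your credit you say so: the two obstacles you name (no uniform Fermat-type construction across all triples $(n,d,p)$, and no specialization or deformation-invariance result that would carry very freeness from a Zariski-open locus in the moduli of hypersurfaces to \emph{every} smooth member) are precisely why the statement remains open. Openness of very freeness in smooth proper families goes the wrong way for you: it propagates the property to nearby fibers, but nothing forces a special smooth hypersurface to be a limit that inherits it, since separable rational connectedness is not known to be a closed condition in positive characteristic. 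So on the status of the conjecture you and the paper agree; as a proof attempt, the gaps you name yourself are fatal rather than technical.

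One further gap you do not flag: your ``easy regime'' is not easy in positive characteristic. The comb-attaching-and-smoothing argument of Koll\'ar--Miyaoka--Mori needs free curves (typically free lines through a general point) to get started, and in characteristic $p$ their existence on an \emph{arbitrary} smooth hypersurface of low degree is itself unknown: the characteristic-zero argument rests on generic smoothness of evaluation maps, exactly what fails in characteristic $p$. Fermat-type hypersurfaces of degree $p^e+1$ are the standard examples of this pathology, and the paper's own $X$ (degree $5 = 2^2+1$, characteristic $2$) carries \emph{no} free curve of degree less than $8$, so on such a variety an argument beginning ``attach free lines'' has nothing to attach. Rational chain connectedness --- which is what is actually known for smooth Fano varieties in characteristic $p$ --- does not supply free curves, because the chains may consist entirely of non-free (inseparably behaving) curves. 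The honest summary is that both of your regimes require new ideas; your program is a reasonable restatement of the open problem, not an attack on it, and it should not present the low-degree case as settled ``with no new ideas.''
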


\noindent
The paper \cite{zhu} disusses this question more broadly.
A little bit about the method of proof. In
Section \ref{section-setup}
we translate the geometric question into an algebraic question which is
computationally more accessible. In
Sections \ref{splittingrelation}, \ref{section-numerology},
and \ref{section-degree-4-5} we exclude low degree solutions by theoretical
methods. Finally, in
Sections \ref{section-degree-8} and \ref{section-degree-9}
we explicitly describe some curves which
are free and very free in degrees 8 and 9 respectively.

\section{The overall setup}
\label{section-setup}

\noindent
In the rest of this paper $k$ will be an algebraically closed field of
characteristic $2$ and $X$ will be the Fermat hypersurface of
degree $5$ over $k$. Let $\varphi : \mathbb{P}^1 \to X$
be a nonconstant morphism. We will repeatedly use that every vector
bundle on $\mathbb{P}^1$ is a direct sum of line bundles, see
\cite{grothendieck}. Thus we can choose a splitting
$$
\varphi^*T_X =
\mathcal{O}_{\mathbb{P}^1}(a_1) \oplus
\mathcal{O}_{\mathbb{P}^1}(a_2) \oplus
\mathcal{O}_{\mathbb{P}^1}(a_3) \oplus
\mathcal{O}_{\mathbb{P}^1}(a_4).
$$
Recall that $\varphi$ is said to be a {\it free curve} on $X$ if
$a_i \geq 0$ and $\varphi$ is said to be {\it very free} if $a_i > 0$.
Consider the following commutative diagram
\begin{equation}
\label{equation-diagram}
\vcenter{
\xymatrix{
& 0 \ar[d] & 0 \ar[d] \\
& \mathcal{O}_X \ar@{=}[r] \ar[d] & \mathcal{O}_X \ar[d] \\
0 \ar[r] &
E_X \ar[d] \ar[r] &
\mathcal{O}_X(1)^{\oplus 6} \ar[d] \ar[r] &
\mathcal{O}_X(5) \ar@{=}[d] \ar[r] &
0 \\
0 \ar[r] &
T_X \ar[r] \ar[d] &
T_{\mathbb{P}^5}|_X \ar[r] \ar[d] &
N_{X/\mathbb{P}^5} \ar[r] &
0 \\
& 0 & 0
}
}
\end{equation}
with exact rows and columns as indicated. We will call $E_X$ the
{\it extended tangent bundle of $X$}. The left vertical exact sequence
determines a short exact sequence
$$
0 \to \mathcal{O}_{\mathbb{P}^1} \to \varphi^*E_X \to \varphi^*T_X \to 0.
$$
The splitting type of $\varphi^*E_X$ will consistently be denoted
$(f_1, f_2, f_3, f_4, f_5)$ in this paper. Since
$\text{Hom}_{\mathbb{P}^1}(\mathcal{O}_{\mathbb{P}^1}(f),
\mathcal{O}_{\mathbb{P}^1}(a)) = 0$
if $f > a$ we conclude that
\begin{enumerate}
\item If $f_i \geq 0$ for all $i$, then $\varphi$ is free.
\item If $f_i > 0$ for all $i$, then $\varphi$ is very free.
\end{enumerate}
For the converse, note that the map
$\mathcal{O}_{\mathbb{P}^1} \to \varphi^*E_X$ has image contained
in the direct sum of the summands with $f_i \geq 0$. Hence, if $f_i < 0$
for some $i$, then $\varphi$ is not free. Finally, suppose that
$f_i \geq 0$ for all $i$. If there are at least two $f_i$ equal to $0$,
then we see that $\varphi$ is free but not very free. We conclude that
\begin{enumerate}
\item[(3)] If $\varphi$ is free, then $f_i \geq 0$ for all $i$.
\item[(4)] If $\varphi$ is very free, then either (a) $f_i > 0$ for all $i$,
or (b) exactly one $f_i = 0$ and all others $> 0$.
\end{enumerate}
We do not know if (4)(b) occurs.

\medskip\noindent
{\bf Translation into algebra.}
Here we work over the graded $k$-algebra $R = k[S,T]$. As usual, we let
$R(e)$ be the graded free $R$-module whose underlying module is $R$ with
grading given by $R(e)_n = R_{e + n}$. A {\it graded free $R$-module}
will be any graded $R$-module isomorphic to a finite direct sum of $R(e)$'s.
Such a module $M$ has a {\it splitting type}, namely the sequence of integers
$u_1, \ldots, u_r$ such that $M \cong R(u_1) \oplus \ldots \oplus R(u_r)$.

\medskip\noindent
We will think of a degree $d$ morphism
$\varphi: \mathbb{P}^1 \to \mathbb{P}^5$ as a $6$-tuple
$(G_0, \ldots, G_5)$ of homogeneous elements in $R$ of degree $d$
with no common factors. Then $\varphi$ is a morphism into $X$ if and only if
$G_0^5 + \ldots + G_5^5 = 0$. In this situation we define two graded
$R$-modules. The first is called the {\it pullback of the cotangent bundle}
$$
\Omega_X(\varphi) =
\text{Ker}(\tilde\varphi : R^{\oplus 6}(-d) \longrightarrow R)
$$
where the map $\tilde\varphi$ is given by
$(A_0, \ldots, A_5) \mapsto \sum A_iG_i$. The second is called the
{\it the pullback of the extended tangent bundle}
$$
E_X(\varphi) = \text{Ker}(R^{\oplus 6}(d) \longrightarrow R(5d))
$$
where the map is given by $(A_0, \ldots, A_5) \mapsto \sum A_iG_i^4$.
Since the kernel of a map of graded free $R$-modules is a graded free
$R$-module, both $\Omega_X(\varphi)$ and $E_X(\varphi)$
are themselves graded free $R$-modules of rank $5$.

\begin{lemma}
\label{lemma-splitting-type}
The splitting type of $\varphi^*E_X$ is equal to the splitting type
of the $R$-module $E_X(\varphi)$.
\end{lemma}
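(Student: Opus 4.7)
The plan is to sheafify the defining complex of the graded $R$-module $E_X(\varphi)$ on $\mathbb{P}^1 = \mathrm{Proj}(R)$ and identify the resulting short exact sequence of sheaves with the pullback along $\varphi$ of the middle row of diagram \eqref{equation-diagram}.

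First I would check that sheafification turns the algebraic complex
$$
0 \to E_X(\varphi) \to R^{\oplus 6}(d) \to R(5d)
$$
into a short exact sequence
$$
0 \to \widetilde{E_X(\varphi)} \to \mathcal{O}_{\mathbb{P}^1}(d)^{\oplus 6} \to \mathcal{O}_{\mathbb{P}^1}(5d) \to 0.
$$
The only non-formal point is surjectivity on the right: the cokernel of the algebraic map is $R(5d)$ modulo the ideal generated by $G_0^4, \ldots, G_5^4$, and the common vanishing locus of these elements on $\mathbb{P}^1$ equals that of $G_0, \ldots, G_5$, which is empty since the $G_i$ have no common factor and so define a morphism. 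Hence the cokernel has no support and its sheafification is zero.

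Second, I would identify the pullback along $\varphi$ of the defining sequence of $E_X$ with the same sequence of sheaves on $\mathbb{P}^1$. Tracing diagram \eqref{equation-diagram}, the map $\mathcal{O}_X(1)^{\oplus 6} \to \mathcal{O}_X(5)$ factors as the restriction of the Euler surjection $\mathcal{O}_{\mathbb{P}^5}(1)^{\oplus 6} \to T_{\mathbb{P}^5}$ composed with the normal map $T_{\mathbb{P}^5}|_X \to N_{X/\mathbb{P}^5} = \mathcal{O}_X(5)$. In homogeneous coordinates this composition is $(A_0, \ldots, A_5) \mapsto \sum A_i\,\partial F/\partial X_i$. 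Since we work in characteristic $2$ and $F = \sum X_i^5$, we have $\partial F/\partial X_i = 5X_i^4 = X_i^4$, so the map is $(A_0, \ldots, A_5) \mapsto \sum A_i X_i^4$. Pulling back by $\varphi$ substitutes $X_i \mapsto G_i$ and yields precisely the map $(A_0, \ldots, A_5) \mapsto \sum A_i G_i^4$ used in the algebraic definition of $E_X(\varphi)$. Thus $\widetilde{E_X(\varphi)}$ and $\varphi^*E_X$ are kernels of the same surjection and coincide as subsheaves of $\mathcal{O}_{\mathbb{P}^1}(d)^{\oplus 6}$.

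Finally, sheafification sends $R(u)$ to $\mathcal{O}_{\mathbb{P}^1}(u)$ and commutes with finite direct sums, so a splitting $E_X(\varphi) \cong R(f_1) \oplus \cdots \oplus R(f_5)$ produces $\varphi^*E_X \cong \mathcal{O}_{\mathbb{P}^1}(f_1) \oplus \cdots \oplus \mathcal{O}_{\mathbb{P}^1}(f_5)$ with exactly the same tuple of integers, giving the claim. The only delicate step is the second one: correctly unraveling the composition that defines the right-hand map of the middle row of \eqref{equation-diagram} and using characteristic $2$ to reduce $5X_i^4$ to $X_i^4$. Everything else is the general compatibility between graded $k[S,T]$-modules and coherent sheaves on $\mathbb{P}^1$.
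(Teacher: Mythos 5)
Your proof is correct and follows essentially the same route as the paper's: identify $\varphi^*E_X$ with the sheaf associated to the graded module $E_X(\varphi)$ by recognizing the right-hand map of the middle row of \eqref{equation-diagram} as the Jacobian $(A_i) \mapsto \sum A_i\,\partial F/\partial X_i = \sum A_i X_i^4$ in characteristic $2$, then substitute $G_i$ for $X_i$. You simply spell out two points the paper leaves implicit (surjectivity of the sheafified map off the empty common zero locus of the $G_i$, and the compatibility of $\widetilde{(-)}$ with twists and direct sums), which is fine but not a different argument.
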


\begin{proof}
Recall that $\mathbb{P}^1 = \text{Proj}(R)$. Thus, a finitely generated
graded $R$-module corresponds to a coherent sheaf on $\mathbb{P}^1$, see
\cite[Proposition 5.11]{Hartshorne}.
Under this correspondence, the module $R(e)$ corresponds to
$\mathcal{O}_{\mathbb{P}^1}(e)$. The lemma follows if we show that
$\varphi^*E_X$ is the coherent sheaf associated to $E_X(\varphi)$.
Diagram (\ref{equation-diagram}) shows that $\varphi^*E_X$ is the kernel
of a map
$\mathcal{O}_{\mathbb{P}^1}(d)^{\oplus 6} \to \mathcal{O}_{\mathbb{P}^1}$
given by substituting $(G_0, \ldots, G_5)$ into the partial derivatives
of the polynomial defining $X$. Since the equation is
$X_0^5 + \ldots + X_5^5$, the derivatives are $X_i^4$, and substituting
we obtain $G_i^4$ as desired.
\end{proof}

\section{Relating the Splitting Types}
\label{splittingrelation}
\noindent
Observe that $\Omega_X(\varphi)$ is also a graded free module of rank 5 and so has a splitting type, which we denote using $e_1,\dots,e_5$. In this section, we relate the splitting type of $\Omega_X(\varphi)$ to the splitting type of $E_X(\varphi)$. 

\medskip \noindent If $(A_0,\dots,A_5) \in \Omega_X(\varphi)$, then $A_0G_0 + \cdots + A_5G_5 = 0$ so that 
\[A_0^4G_0^4 + \cdots + A_5^4G_5^4 = 0\]
by the Frobenius endomorphism in characteristic 2. Let 
\[\mathcal{T} = \{(A_0^4,\dots,A_5^4) \mid (A_0,\dots,A_5) \in \Omega_X(\varphi)\} \]
in $E_X(\varphi)$. We denote the $R$-module generated by $\mathcal{T}$ as $R\langle\mathcal{T}\rangle$.

\begin{lemma} In the notation above, $E_X(\varphi) = R\langle\mathcal{T}\rangle$.
\end{lemma}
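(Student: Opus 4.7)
The plan is to prove both inclusions. The containment $R\langle\mathcal{T}\rangle \subseteq E_X(\varphi)$ is the Frobenius calculation given just before the lemma.

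For the reverse containment, choose a homogeneous basis $v_1, \dots, v_5$ of the free rank-$5$ $R$-module $\Omega_X(\varphi)$. Frobenius additivity in characteristic $2$ gives $\bigl(\sum_j r_j v_j\bigr)^{(4)} = \sum_j r_j^4 v_j^{(4)}$, where $v_j^{(4)}$ denotes componentwise fourth power. Hence every element of $\mathcal{T}$ is an $R$-linear combination of the $v_j^{(4)}$ with coefficients that happen to be fourth powers, and therefore $R\langle\mathcal{T}\rangle = Rv_1^{(4)} + \dots + Rv_5^{(4)}$. It remains to show that the $v_j^{(4)}$ span $E_X(\varphi)$.

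For this final step I would apply the Hilbert--Burch theorem to the two $(S,T)$-primary ideals $(G_0, \dots, G_5)$ and $(G_0^4, \dots, G_5^4)$. Because both ideals are $(S,T)$-primary, the ``common factor'' produced by Hilbert--Burch is forced to be a unit in $k^*$ (not merely a nonzerodivisor). Thus the $5 \times 5$ minors of the matrix $A = (A_{j,i})$ with rows $v_j$ equal the $G_i$ up to a unit $u \in k^*$, and the maximal minors of any basis matrix $W$ of $E_X(\varphi)$ equal the $G_i^4$ up to a unit $u' \in k^*$. The standard characteristic-$p$ identity $\det(M^{[p]}) = \det(M)^p$, where $M^{[p]}$ is the componentwise $p$-th power (and which follows from Frobenius additivity together with $\mathrm{sgn}(\sigma) = \mathrm{sgn}(\sigma)^p$), then shows that the maximal minors of $A^{[4]}$ are $u^{-4} G_i^4$. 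Writing $A^{[4]} = CW$ for a $5 \times 5$ matrix $C$ over $R$ and comparing maximal minors yields $\det C = u'/u^4 \in k^*$. Hence $C \in \mathrm{GL}_5(R)$, and the $v_j^{(4)}$ form a basis of $E_X(\varphi)$. The main obstacle is invoking Hilbert--Burch correctly and pinning down that the common factors are literally units in $k^*$, which follows from the $(S,T)$-primary property of both ideals.
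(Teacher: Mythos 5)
Your proof is correct, but it takes a genuinely different route from the paper's. The paper argues directly on an arbitrary homogeneous element $(B_0,\dots,B_5)\in E_X(\varphi)$ of degree $b$: it writes each $B_i$ over the subring $k[S^4,T^4]$, e.g.\ for $b\equiv 0 \bmod 4$ as $B_i = a_{i1}^4 + a_{i2}^4S^3T + a_{i3}^4S^2T^2 + a_{i4}^4ST^3$, substitutes into $\sum B_iG_i^4=0$, and uses the fact that the four residue classes of the $T$-exponent mod $4$ cannot interact to conclude that each $\sum_i a_{ij}G_i=0$, i.e.\ each $(a_{0j},\dots,a_{5j})\in\Omega_X(\varphi)$; this is elementary and self-contained, at the cost of a small case analysis on $b\bmod 4$. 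You instead reduce to showing that the componentwise fourth powers $v_j^{(4)}$ of a basis of $\Omega_X(\varphi)$ form a basis of $E_X(\varphi)$, and prove that via Hilbert--Burch, Cauchy--Binet, and $\det(M^{[p]})=\det(M)^p$. The key points you needed are all in order: both resolutions $0\to R^5\to R^6\to I\to 0$ exist because the kernels are free of rank $5$; both ideals have radical $(S,T)$ (the $G_i$ are homogeneous with no common factor, so their only common zero is the origin), which forces the Hilbert--Burch nonzerodivisor to be a unit since otherwise $I$ would be contained in a height-one principal ideal; and at least one $G_i\neq 0$, so comparing minors pins down $\det C\in k^*$. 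Your route uses heavier machinery, but it buys you more: it proves in one stroke the Proposition that immediately follows this lemma in the paper (that the $y_i=x_i^{(4)}$ form a basis of $E_X(\varphi)$), whereas the paper proves the lemma first and then deduces the Proposition from it by a separate (easier) argument.
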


\begin{proof}
Let $(B_0,\dots,B_5)$ be an element of $E_X(\varphi)$ where $B_i$ is a homogeneous polynomial of degree $b$. We consider the case $b \equiv 0 \mod 4$.

\medskip \noindent Observe that we can rewrite each monomial term of $B_i$ as $(c^{1/4}S^{\ell}T^k)^4S^iT^{4-i}$ or $(c^{1/4}S^{\ell}T^k)^4$ for some integers $\ell,k$, where $c \in k$ and $0 < i <4$. After collecting terms and applying the Frobenius endomorphism, we obtain 
\[B_i = a_{i1}^4 + a_{i2}^4S^3T + a_{i3}^4S^2T^2 + a_{i4}^4ST^3\]
 where each $a_{ij}$ is an element of $R$.  Then, since $\displaystyle B_0G_0^4 + \cdots + B_5G_5^4 = 0$, substituting our expression for the $B_i$'s and applying Frobenius we obtain
\begin{eqnarray*}
(\sum_{i=0}^5 a_{i1}G_i)^4 + (\sum_{i=0}^5a_{i2}G_i)^4S^3T + (\sum_{i=0}^5a_{i3}G_i)^4S^2T^2 + (\sum_{i=0}^5a_{i4}G_i)^4ST^3  &=& 0
\end{eqnarray*}
The sums $\sum_{i=0}^5 a_{ij}G_i$ are each themselves homogeneous polynomials. But since the degree of $T$ in each term above is distinct modulo 4, the equation $\sum_{i=0}^5 a_{ij}G_i = 0$ implies that $(a_{0j},\dots,a_{5j}) \in \Omega_X(\varphi)$, so that $(a_{0j}^4,\dots,a_{5j}^4) \in \mathcal{T}$ for $1 \le j \le 4$.

\medskip\noindent
Hence, every homogeneous element of $E_X(\varphi)$ is contained in the submodule generated by $\mathcal{T}$. Since the reverse containment is trivial, it follows that $E_X(\varphi) = R\langle\mathcal{T}\rangle$. The cases for $b \equiv 1,2,3 \mod 4$ follow similarly. 
\end{proof}

\begin{proposition}
If $x_i = (x_{i0},\dots,x_{i5})$ for $1 \le i \le 5$ form a basis for $\Omega_X(\varphi)$, then $y_i = (x_{i0}^4,\dots,x_{i5}^4)$ for $1 \le i \le 5$ form a basis for $E_X(\varphi)$.
\end{proposition}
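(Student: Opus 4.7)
My plan is to use the preceding lemma (which identifies $E_X(\varphi)$ with $R\langle\mathcal{T}\rangle$) to reduce the proposition to two tasks: showing that $y_1,\ldots,y_5$ generate $E_X(\varphi)$, and then invoking a rank argument to upgrade generation to a basis.

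For generation, let $(A_0,\ldots,A_5)\in\Omega_X(\varphi)$ be arbitrary. Since $x_1,\ldots,x_5$ is a basis of $\Omega_X(\varphi)$, there exist unique $r_1,\ldots,r_5\in R$ with $(A_0,\ldots,A_5)=\sum_i r_i x_i$, i.e., $A_j=\sum_i r_i x_{ij}$ for $j=0,\ldots,5$. Raising to the fourth power and using that Frobenius is additive in characteristic $2$ (so $(\sum_i r_ix_{ij})^4=\sum_i r_i^4 x_{ij}^4$), we get $(A_0^4,\ldots,A_5^4)=\sum_i r_i^4 y_i$. Hence $\mathcal{T}\subseteq\sum_i Ry_i$, and by the previous lemma $E_X(\varphi)=R\langle\mathcal{T}\rangle\subseteq\sum_i Ry_i$. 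The reverse containment is clear because $y_i\in E_X(\varphi)$ (as $y_i\in\mathcal{T}$), so $E_X(\varphi)=\sum_i Ry_i$.

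To promote this to linear independence, I would consider the graded $R$-module map $\pi\colon R(-4e_1)\oplus\cdots\oplus R(-4e_5)\to E_X(\varphi)$ sending the standard basis to $y_1,\ldots,y_5$ (where $e_i$ is the degree shift so that $\pi$ is homogeneous). We have just shown $\pi$ is surjective. Since $E_X(\varphi)$ is graded free of rank $5$ (established in Section~\ref{section-setup}), the source and target are graded free $R$-modules of the same rank. Tensoring with the fraction field $K=k(S,T)$ yields a surjective $K$-linear map between two $5$-dimensional $K$-vector spaces, which is therefore an isomorphism. Thus $\ker\pi$ is a torsion submodule of a free $R$-module, so $\ker\pi=0$ and $\pi$ is an isomorphism. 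Equivalently, $y_1,\ldots,y_5$ form a basis of $E_X(\varphi)$.

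The only subtle point is the Frobenius identity for multiple summands, which reduces immediately to the two-term case by induction in characteristic $2$; everything else is formal once the generation statement is in place.
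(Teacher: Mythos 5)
Your proof is correct and takes essentially the same route as the paper: generation of $E_X(\varphi)$ by the $y_i$ via the lemma $E_X(\varphi)=R\langle\mathcal{T}\rangle$ together with the Frobenius, and then the observation that five generators of a graded free module of rank $5$ over a domain must be linearly independent. You simply make explicit two steps the paper leaves implicit, namely the identity $(\sum_i r_i x_{ij})^4 = \sum_i r_i^4 x_{ij}^4$ and the localization-to-the-fraction-field argument for independence.
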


\begin{proof}
If $x_i \in \Omega_X(\varphi)$, then $y_i  \in \mathcal{T}$ and every element of $\mathcal{T}$ is an $R$-linear combination of the $y_i$'s. Since $E_X(\varphi) = R\langle\mathcal{T}\rangle$, every element of $E_X(\varphi)$ is also an $R$-linear combination of the $y_i$'s so that the $y_i$'s generate $E_X(\varphi)$. Moreover, $E_X(\varphi)$ is a free module of rank 5 over a domain, so the generators $y_i$ for $E_X(\varphi)$ must also be linearly independent and hence form a basis.  
\end{proof}
\noindent 

\noindent Accounting for twist, a simple computation using the results above gives us the following.

\begin{corollary}
\label{splittingformula}
If $f_i$ denotes the splitting type of $E_X(\varphi)$ and $e_i$ denotes the splitting type of $\Omega_X(\varphi)$, then for a degree $d$ morphism, $f_i = 4e_i + 5d$.
\end{corollary}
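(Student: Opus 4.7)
The strategy is simply to track the two gradings carefully and see what the Frobenius operation does to degrees. The proposition just proved gives us the basis $\{y_i\}$ of $E_X(\varphi)$ obtained by raising entries of the basis $\{x_i\}$ of $\Omega_X(\varphi)$ to the fourth power, so all the work lies in correctly reading off the grading shift.

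The plan is as follows. First I would nail down how a splitting $\Omega_X(\varphi) \cong \bigoplus_{i=1}^5 R(e_i)$ translates into concrete polynomial degrees. Using the convention $R(e)_n = R_{e+n}$, the generator of $R(e_i)$ lives in graded degree $-e_i$. Viewed inside $R^{\oplus 6}(-d)$, an element of graded degree $-e_i$ consists of a tuple of honest polynomials of degree $-e_i - d$. Hence each entry $x_{ij}$ of the basis element $x_i$ is a homogeneous polynomial of ordinary degree $-e_i - d$.

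Next I would compute the graded degree of $y_i = (x_{i0}^4, \ldots, x_{i5}^4)$ inside $E_X(\varphi) \subset R^{\oplus 6}(d)$. Each entry $x_{ij}^4$ has ordinary polynomial degree $4(-e_i - d) = -4e_i - 4d$. Since an element of $R(d)$ of polynomial degree $p$ has graded degree $p - d$, the tuple $y_i$ sits in graded degree $(-4e_i - 4d) - d = -4e_i - 5d$ of $R^{\oplus 6}(d)$.

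Finally, by the preceding proposition, $\{y_1, \ldots, y_5\}$ is a basis of $E_X(\varphi) \cong \bigoplus_{i=1}^5 R(f_i)$. Matching up generators, $y_i$ is a homogeneous generator of the summand $R(f_i)$, so its graded degree equals $-f_i$. Combining this with the previous step gives $-f_i = -4e_i - 5d$, whence $f_i = 4e_i + 5d$, as claimed. There is no real obstacle here beyond keeping the two sign conventions (the twist $R(e)$ versus ordinary polynomial degree, and the twist by $-d$ on the $\Omega$-side versus $+d$ on the $E$-side) straight; once those are reconciled, the identity is immediate.
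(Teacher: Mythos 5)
Your computation is correct and is exactly the ``simple computation accounting for twist'' that the paper leaves implicit: a generator of $R(e_i)$ inside $R^{\oplus 6}(-d)$ is a tuple of polynomials of degree $-e_i-d$, its fourth power has degree $-4e_i-4d$, and reading that off inside $R^{\oplus 6}(d)$ gives graded degree $-4e_i-5d$, hence $f_i=4e_i+5d$. This matches the paper's intended argument, and one can sanity-check it against the explicit splitting types in Sections \ref{section-degree-8} and \ref{section-degree-9}.
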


\section{Numerology}
\label{section-numerology}

\noindent We now utilize some facts about graded free modules in order to give constraints on potential splitting types.  Given a graded free module
\[ M = R(u_1) \oplus ... \oplus R(u_r) \]
one can observe that the Hilbert polynomial $H_M$ is given by
\[H_M(m) = rm + u_1 + ... + u_r + r. \]
Let $\varphi$ denote a free morphism into $X$.  Noting that the map $\tilde{\varphi}: R(-d)^{\oplus n+1}_m \longrightarrow R_m$ is surjective for $m \gg 0$, we obtain
\begin{align*}
H_{\Omega(\varphi)}(m) &= \dim_k\left(\ker(R(-d)^{\oplus n+1}_m \longrightarrow R_m)\right) \\
&= (n+1)(-d + m + 1) - (m+1) \\
&= nm + -d(n+1) + n.
\end{align*}
A similar calculation shows that,
\[H_{E_X(\varphi)}(m) = nm + d(n+1-5) + n\]
We continue to refer to the splitting type components of $\Omega(\varphi)$, respectively $E_X(\varphi)$ as $e_i$, respectively $f_i$.  In both cases $n=r=5$, so combining these two equations with the general form for the Hilbert polynomial of a graded free module, we obtain our first constraints:
\begin{align}
e_1 + e_2 + e_3 + e_4 + e_5 &= -6d \\
f_1 + f_2 + f_3 + f_4 + f_5 &= d.
\end{align}
Recall from Section \ref{section-setup} that a curve is free, respectively very free if $f_i \geq 0$, respectively $f_i > 0$ for each $i$.  Since $f_i = 4e_i + 5d$, it follows that
\begin{align}
e_i \geq -\frac{5d}{4}
\end{align}
where strict inequality implies the curve is very free.  With these two bounds, we can quickly observe a few facts about curves of different degrees.

\begin{remark} \ 
\label{charlie}
\begin{enumerate}
\item
There exist no free curves in degrees $1,2,3,6,$ and $7$.
\item
Any free curve of degree not divisible by $4$ must be very free.
\item
There are no very free curves in degrees $4$ or $8$.
\item
The $\Omega(\varphi)$ splitting type of a degree $4$ free curve must be $(-5,-5,-5,-5,-4)$.
\item
The $\Omega(\varphi)$ splitting type of a degree $5$ very free curve must be $(-6,-6,-6,-6,-6)$.
\end{enumerate} 
\end{remark}

\noindent All of these observation follow directly from the two constraints.  For example, in degree $6$, $e_1 + e_2 + e_3 +e_4 + e_5 = -6d = -36$.  However, each $e_i \geq \frac{-30}{4} = -7.5$.  So even if each $e_i$ is at best $-7$, the $e_i$ cannot sum to $-36$.  

\medskip \noindent The rest of the remarks follow in a similar manner.  Note that one can glean even more information about these curves from the constraints, but the remarks listed above are sufficient for our purposes.

\section{Degree 4 and 5 morphisms into X}
\label{section-degree-4-5}

\noindent
We will now show that there are no free morphisms of degrees $4$ or $5$ into $X$.
A morphism $\varphi = (G_0, ..., G_5)$, where each $G_i = \sum_{j=0}^d a_{ij}S^{d - j}T^{j}$ is a  homogeneous polynomials of degree $d$, gives us a $6 \times (d+1)$ matrix $(a_{ij})$. We will denote this matrix as $M_{\varphi}$. 

\begin{lemma}
\label{lemma-matrices-have-max-rank}
If $\varphi$ is a degree $4$ or $5$  free morphism into $X$, then $M_{\varphi}$ has maximal rank.
\end{lemma}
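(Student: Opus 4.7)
The plan is to identify the rank of $M_\varphi$ with a Hilbert-function value for $\Omega_X(\varphi)$, and then use the numerical bounds from Sections \ref{splittingrelation} and \ref{section-numerology} to show this value is forced to be the maximum possible in degrees $4$ and $5$.

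The first step is to observe that evaluating $\tilde\varphi \colon R(-d)^{\oplus 6} \to R$ in degree $d$ yields a $k$-linear map $k^6 \to R_d$ sending the $i$-th standard basis vector to $G_i$. Its image is $\mathrm{span}_k(G_0,\ldots,G_5)$ and its kernel is $\Omega_X(\varphi)_d$, so
\[ \dim_k \mathrm{span}_k(G_0,\ldots,G_5) = 6 - \dim_k \Omega_X(\varphi)_d. \]
After identifying $R_d$ with $k^{d+1}$ via the monomial basis, this span is precisely the row space of $M_\varphi$, so the rank of $M_\varphi$ equals the same dimension. Because $\dim R_d = d+1 \leq 6$ for $d\in\{4,5\}$, $M_\varphi$ has maximal rank $d+1$ if and only if $\dim_k \Omega_X(\varphi)_d = 5-d$.

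The second step is the numerology. Writing $\Omega_X(\varphi)\cong R(e_1)\oplus\cdots\oplus R(e_5)$, Corollary \ref{splittingformula} combined with freeness ($f_i \geq 0$) gives $e_i \geq -5d/4$. For $d=4$ this forces $e_i\geq -5$, so every $e_i + d + 1 = e_i + 5$ is non-negative and $\dim_k \Omega_X(\varphi)_4 = \sum_i(e_i+5) = -24 + 25 = 1$, using $\sum e_i = -6d = -24$. For $d=5$ we get $e_i \geq -6$, so every $e_i+6$ is non-negative and $\dim_k \Omega_X(\varphi)_5 = \sum_i(e_i+6) = -30 + 30 = 0$. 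In each case the dimension matches exactly the value $5-d$ required by the first step.

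There is no real obstacle: once the rank is translated into a Hilbert-function value, the lower bounds on the $e_i$ turn out to be tight enough in these low degrees for the calculation to come out exactly as needed. This tightness is also why the argument is limited to $d=4$ and $d=5$; for larger degrees the bound on $e_i$ has slack and additional input would be required to pin down the dimension of $\Omega_X(\varphi)_d$.
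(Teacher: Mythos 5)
Your proof is correct and follows essentially the same route as the paper: both identify the rank of $M_\varphi$ with $6-\dim_k\Omega_X(\varphi)_d$ via the map $\tilde\varphi_d$, and both extract that dimension from the numerology of Section \ref{section-numerology} (the paper passes through the explicit splitting types in Remark \ref{charlie}(4),(5), while you sum $e_i+d+1$ directly from the constraints $\sum e_i=-6d$ and $e_i\geq -5d/4$, which is the same computation). No gaps; your version just spells out the details the paper leaves implicit.
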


\begin{proof} This follows from Remark \ref{charlie} (4) and (5) by observing that for a degree $d$ morphism into $X$, the transpose of $M_{\varphi}$ is the matrix of the $k$-linear map $\tilde{\varphi}_d: (R(-d)^{\oplus 6})_d \rightarrow R_d$.
\end{proof}

\begin{lemma}
\label{lemma-morphisms-degree 4 and 5}
\ 
\begin{enumerate}
\item[(a)] There are no degree $4$ free morphisms into $X$.
\item[(b)] There are no degree $5$ free morphisms into $X$.
\end{enumerate}
\end{lemma}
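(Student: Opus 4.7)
The plan is to convert the relation $\sum_{i=0}^5 G_i^5 = 0$ into a condition on a $(d+1)\times(d+1)$ matrix $c$ built bilinearly from $M_\varphi$, and then to use Lemma~\ref{lemma-matrices-have-max-rank} to force incompatible upper and lower bounds on $\mathrm{rank}(c)$.

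Write $G_i = \sum_{j=0}^{d} a_{ij}\,S^{d-j}T^j$ with $A = M_\varphi = (a_{ij})$, and expand $G_i^5 = G_i^4\cdot G_i$ using additivity of $x\mapsto x^4$ in characteristic~$2$ to get
\[
\sum_{i=0}^5 G_i^5 \;=\; \sum_{j,k=0}^d c_{jk}\, S^{5d-4j-k}\, T^{4j+k},
\qquad c_{jk} \;=\; \sum_{i=0}^5 a_{ij}^4 a_{ik} \;=\; \bigl((A^{(4)})^T A\bigr)_{jk},
\]
where $A^{(4)}$ denotes the entrywise fourth power. The vanishing of this sum amounts to $\sum_{4j+k=m} c_{jk} = 0$ for every $m \in \{0,\ldots,5d\}$. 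A direct enumeration of the pairs $(j,k)$ with $0\le j,k\le d$ and fixed $4j+k$ shows: when $d=4$, each $m$ of the form $4j+k$ with $k \in \{1,2,3\}$ is uniquely realized, so those entire columns of $c$ must vanish; when $d=5$, the same happens for $k \in \{2,3\}$. Hence $\mathrm{rank}(c) \le 2$ when $d=4$ and $\mathrm{rank}(c) \le 4$ when $d=5$.

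For the opposing bound, Lemma~\ref{lemma-matrices-have-max-rank} gives that $A$ has maximal rank~$d+1$. In characteristic~$2$ the Leibniz formula yields $\det(B^{(4)}) = \det(B)^4$ for any square $B$ (signs disappear and Frobenius distributes over sums), and applied to maximal minors this shows $A^{(4)}$ also has maximal rank. When $d=5$, both $A$ and $(A^{(4)})^T$ are invertible $6\times 6$ matrices, so $c$ is invertible of rank~$6$, contradicting $\mathrm{rank}(c) \le 4$. When $d=4$, $A\colon k^5 \to k^6$ is injective and $(A^{(4)})^T\colon k^6 \to k^5$ has a one-dimensional kernel, so $\ker(c)$ corresponds via $A$ to $\mathrm{im}(A) \cap \ker((A^{(4)})^T)$, a subspace of dimension at most~$1$; thus $\mathrm{rank}(c) \ge 4$, again contradicting the upper bound.

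The main obstacle is the combinatorial bookkeeping behind the upper bound: one must verify that for every $m$ lying in the columns declared to be zero the equation $4j+k=m$ has exactly one solution in the admissible range, since even a single cancellation between two nonzero $c_{jk}$'s would erode the rank gap and lose the contradiction.
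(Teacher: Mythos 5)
Your proposal is correct and takes essentially the same approach as the paper: both expand $G_i^5 = G_i^4\cdot G_i$, observe that the monomials $S^{5d-4j-k}T^{4j+k}$ with $k\in\{1,2,3\}$ (resp.\ $k\in\{2,3\}$) occur exactly once, deduce that the corresponding columns of $(A^{(4)})^T A$ vanish, and contradict the maximal-rank conclusion of Lemma~\ref{lemma-matrices-have-max-rank} via the Frobenius-and-determinant argument. Your rank bookkeeping on the matrix $c$ is just a repackaging of the paper's kernel-dimension count.
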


\begin{proof} (a) Assume a degree $4$ free morphism $\varphi = (G_0, ..., G_5)$ exists. By the previous lemma, the $6 \times 5$ matrix $M_{\varphi} = (a_{ij})$ has maximal rank. Since permuting the $G_i's$ does not affect the splitting type of $E_X(\varphi)$, we can assume that the first $5$ rows of $M_{\varphi}$ are linearly independent over $k$. Then det$((a_{ij})_{i \leq 4}) \neq 0$.
Now consider the matrix $\overline{M_{\varphi}} = (a^4_{ij})$. By the Frobenius endomorphism on $k$, $\det((a^4_{ij})_{i \leq 4})$ = $\det((a_{ij})_{i \leq 4}))^4 \neq 0$, proving that $\overline{M_{\varphi}}$ has maximal rank as well.

\medskip \noindent
Since $G^5_0 + ... + G^5_5 = 0$, computing the coefficients of $G^5_0 + ... + G^5_5 $, we obtain for $0 \leq j \leq 4$
\begin{equation}
\sum\nolimits_{i=0}^5 a^4_{ij}a_{i1} = 0
\quad\text{and}\quad
\sum\nolimits_{i=0}^5 a^4_{ij}a_{i3} = 0.
\end{equation}
The kernel of the map $k^6 \rightarrow k^5$ given by right multiplication by the matrix $\overline{M_{\varphi}}$ has dimension $1$ because rank$\big{(}\overline{M}_{\varphi}\big{)} = 5$. By (5.2.1), $(a_{01}, a_{11}, ...,a_{51})$, $(a_{03}, a_{13}, ..., a_{53})$ $\in$ ker$(k^6 \rightarrow k^5)$, and since  these $6$-tuples are columns of $M_{\varphi}$, they are linearly independent over $k$. Then dim$_k$\big{(}ker$(k^6 \rightarrow k^5)\big{)}$ $\geq$ $2$, a contradiction.\\

\noindent
(b) Assume $\varphi = (G_0, ..., G_5)$ is a degree $5$ free morphism. By the previous lemma, the matrix $M_{\varphi} = (a_{ij})$ has maximal rank, and is invertible. Thus $\overline{M_{\varphi}} = (a^4_{ij})$ is invertible by the same argument above.
Since $G^5_0 + ... + G^5_5 = 0$, computing the coefficients of the polynomial $G^5_0 + \cdots + G^5_5$, we get
$\sum_{i=0}^5 a^4_{ij}a_{i2} = 0$ for $0 \leq j \leq 5$. 
Thus, the product of the row matrix $(a_{02}, a_{12}, ... , a_{52})$ and the matrix $\overline{{M}_{\varphi}}$ is $0$, which is impossible because $(a_{02}, a_{12}, ... , a_{52}) \neq 0$ and $\overline{{M}_{\varphi}}$ is invertible.
\end{proof}

\section{Computations for the degree $8$ free curve}
\label{section-degree-8}

\noindent Let $\varphi:\mathbb{P}^1\to\mathbb{P}^5$ be a morphism given by the 6-tuple
\begin{align*}
&G_0 = S^7T \\
&G_1 = S^4T^4 + S^3T^5 \\
&G_2 = S^4T^4 + S^3T^5 + T^8 \\
&G_3 = S^7T + S^6T^2 + S^5T^3 + S^4T^4 + S^3T^5 \\
&G_4 = S^8 + S^7T + S^6T^2 + S^5T^3 + S^4T^4 + S^3T^5 + T^8 \\
&G_5 = S^8 + S^7T + S^6T^2 + S^5T^3 + S^4T^4 + S^3T^5 + S^2T^6 + ST^7.
\end{align*}

\noindent One can check by computer or by hand that this curve lies on the Fermat hypersurface $X\subset\mathbb{P}^5$. Due to twisting, the domain of the map $\tilde{\varphi}:R(-8)^{\oplus 6}\to R$ has its first nontrivial graded piece in dimension $8$. The $G_i$ are linearly independent over $k$, hence the kernel is trivial in dimension $8$. The matrix for the map $\tilde{\varphi}_9:R(-8)_{9}^{\oplus 6}\to R_9$ is
$$
\begin{pmatrix}
0 &0 &0 &0 &0 &0 &0 &0 &1 &0 &1 &0 \\
1 &0 &0 &0 &0 &0 &1 &0 &1 &1 &1 &1 \\
0 &1 &0 &0 &0 &0 &1 &1 &1 &1 &1 &1 \\
0 &0 &0 &0 &0 &0 &1 &1 &1 &1 &1 &1 \\
0 &0 &1 &0 &1 &0 &1 &1 &1 &1 &1 &1 \\
0 &0 &1 &1 &1 &1 &1 &1 &1 &1 &1 &1 \\
0 &0 &0 &1 &0 &1 &0 &1 &0 &1 &1 &1 \\
0 &0 &0 &0 &0 &0 &0 &0 &0 &0 &1 &1 \\
0 &0 &0 &0 &1 &0 &0 &0 &1 &0 &0 &1 \\
0 &0 &0 &0 &0 &1 &0 &0 &0 &1 &0 &0
\end{pmatrix}
$$

\noindent where each direct summand of the domain has a basis $\left\{(S,0),(0,T)\right\}$, of which we take six copies (for total dimension $12$), and the range has basis given by the degree $9$ monomials in $S$ and $T$, ordered by increasing $T$-degree (for total dimension $10$). This matrix has rank $10$, which means that the map in degree $9$ is surjective. By rank-nullity, two dimensions of the kernel live in degree $9$; denote the generators by $x_1,x_2$. Surjectivity of $\tilde{\varphi}$ in degree $9$ implies surjectivity in all higher degrees. A second application of rank-nullity gives $\dim_k\Omega(\varphi)_{10}=7$. Four of the generators are inherited from the previous degree, taking the forms
$$
x_1S,x_2S,x_1T,x_2T.
$$

\noindent We conclude that there are three additional generators in degree $10$. Therefore, the splitting type of $\Omega_X(\varphi)$ is $(e_1,...,e_5)=(-10,-10,-10,-9,-9)$, which corresponds to a splitting type for $E_X(\varphi)$ of $(f_1,...,f_5)=(0,0,0,4,4)$, hence the curve is free.

\section{A Very Free Rational Curve of Degree 9}
\label{section-degree-9}

\noindent
We conclude by giving an example of a degree 9 very free curve lying on $X$. Let $\varphi:\mathbb{P}^1 \rightarrow \mathbb{P}^5$ be a morphism into the Fermat hypersurface given by the 6-tuple
\begin{align*}
G_0 &= S^4T^5 \\
G_1 &= S^9 + S^8T + S^5T^4 \\
G_2 &= S^9 + S^4T^5 + ST^8 \\  
G_3 &= S^9 + S^8T + S^4T^5 + S^3T^6 + S^2T^7 + ST^8 \\ 
G_4 &= S^9 + S^5T^4 + S^3T^6 +S^2T^7 + ST^8 + T^9 \\ 
G_5 &= S^7T^2 + S^6T^3 + S^5T^4 + S^3T^6 + S^2T^7 +ST^8 +T^9.
\end{align*}
\noindent Let $e_1,...,e_5$ again denote the splitting type of $\Omega_X (\varphi)$.  As in Section \ref{section-degree-8}, we know that $e_i \le -9$.  Since the $G_i$ are linearly independent over $k$, $\dim_k(\Omega_X(\varphi)_9) = 0$.  Next we claim that $\tilde{\varphi_{10}} : R_1^{\bigoplus6} \rightarrow R_{10}$ is surjective.  In fact, it can be checked that the $\tilde{\varphi}(b_i)$ span $R_{10}$, where the $b_i$ are distinct basis elements of $R_1^{\bigoplus6}$.  It follows that $\tilde{\varphi_n} : R(-9)_n^{\bigoplus6} \rightarrow R_n$ is surjective for $n \ge 10$.  Hence, 
\begin{align*}
\dim_k(\Omega_X(\varphi)_{10}) &= \dim_k(R_1^{\bigoplus6}) - \dim_k(R_{10}) = 1\\
\dim_k(\Omega_X(\varphi)_{11}) &= \dim_k(R_2^{\bigoplus6}) - \dim_k(R_{11}) = 6.
\end{align*}

\noindent
After reordering, this yields $(e_1,...,e_5) = (-11,-11,-11,-11,-10)$, which corresponds to the splitting type $(1,1,1,1,5)$ of $E_X(\varphi)$, showing that $\varphi$ is very free.  This completes the proof of Theorem \ref{theorem-main}.

\bibliography{bibli}
\bibliographystyle{gost2008}
\end{document}